\documentclass[12pt]{amsart}
\pdfoutput=1
\usepackage[margin=1in]{geometry}
\usepackage[utf8]{inputenc}
\usepackage{graphicx, cite}
\usepackage{amsmath,amssymb}
\usepackage{amsthm}
\usepackage{color}
\usepackage{tikz}
\usepackage[utf8]{inputenc}
\usepackage{pgfplots}
\pgfplotsset{compat=1.18}
\usetikzlibrary{fillbetween}
\allowdisplaybreaks

\makeatletter
\def\l@section{\@tocline{1}{12pt plus2pt}{0pt}{}{\bfseries}}
\def\l@subsection{\@tocline{2}{0pt}{2pc}{2pc}{}}
\makeatother
\makeatletter
\def\subsection{\@startsection{subsection}{2}{\z@}%
	{-3.25ex\@plus -1ex \@minus -.2ex}%
	{1.5ex \@plus .2ex}%
	{\normalfont\bfseries\boldmath}}
\def\subsubsection{\@startsection{subsubsection}{3}%
	\z@{.5\linespacing\@plus.7\linespacing}{-.5em}%
	{\normalfont\bfseries\boldmath}}
\renewcommand\paragraph{\@startsection{paragraph}{4}{\z@}%
	{3.25ex \@plus1ex \@minus.2ex}%
	{-1em}%
	{\normalfont\normalsize\bfseries}}
\makeatother

\theoremstyle{plain}
\newtheorem{thm}{Theorem}[section]

\newtheorem{lem}[thm]{Lemma}

\theoremstyle{definition}

\theoremstyle{remark}
\newtheorem{rem}[thm]{Remark}

\theoremstyle{plain}

\numberwithin{equation}{section}

\theoremstyle{plain} 
\newcommand{\thistheoremname}{}
\newtheorem{genericthm}[thm]{\thistheoremname}

  \newtheorem*{genericthm*}{\thistheoremname}
\newenvironment{namedthm*}[1]
  {\renewcommand{\thistheoremname}{#1}%
   \begin{genericthm*}}
  {\end{genericthm*}}

\newcommand{\HH}{{\mathbb H}}
\newcommand{\R}{{\mathbb R}}

\newcommand{\Z}{{\mathbb Z}}

\newcommand{\calS}{{\mathcal S}}
\newcommand{\supp}{{\textnormal{supp}}}

\makeatletter
\newcommand{\vast}{\bBigg@{4}}
\newcommand{\Vast}{\bBigg@{5}}
\makeatother

\def\udot#1{\ifmmode\oalign{$#1$\crcr\hidewidth.\hidewidth
    }\else\oalign{#1\crcr\hidewidth.\hidewidth}\fi}

\def\R{\mathbb{R}}
\def\Z{\mathbb{Z}}

\def\beq{\begin{equation}}
\def\eeq{\end{equation}}
\makeatletter
\newcommand{\doublewidetilde}[1]{{%
  \mathpalette\double@widetilde{#1}%
}}
\newcommand{\double@widetilde}[2]{%
  \sbox\z@{$\m@th#1\widetilde{#2}$}%
  \ht\z@=.9\ht\z@
  \widetilde{\box\z@}%
}
\makeatother

\usepackage{tikz}

\makeatletter
\def\@makefnmark{%
  \leavevmode
  \raise.9ex\hbox{\fontsize\sf@size\z@\normalfont\tiny\@thefnmark}}
\makeatother

\begin{document}
	
\title[]{Sparse domination and the bi-parameter Hilbert transform}

\author{Bingyang Hu}
\address{(Bingyang Hu) Department of Mathematics and Statistics\\
        Auburn University\\
        Auburn, Alabama, U.S.A, 36849}
\email{bzh0108@auburn.edu}

\begin{abstract}
We study domination of the bi-parameter Hilbert transform
by $(1,1)$-type sparse forms over axis-parallel rectangles,
and show that such domination fails even for smooth test
functions compactly supported in $(0,1)^2$.
Our proof combines an explicit construction of two finite
lattices based on the arithmetic of $\Z[\sqrt2]$
with a randomization argument.
\end{abstract}

\date{October 3, 2026}
\subjclass[2020]{Primary 42B20; Secondary 42B25}

\keywords{Sparse domination, bi-parameter Hilbert transform,
multi-parameter singular integrals, strong maximal operator}

\maketitle

\section{Introduction}

Consider the \emph{bi-parameter Hilbert transform} in $\R^2$, which is defined as
\begin{align}\label{BPHTeq01}
 \HH f(x)
 &:=\frac{1}{\pi^2}\operatorname{p.v.}
 \int_{\R^2}\frac{f(y)}{(x_1-y_1)(x_2-y_2)}\,dy \nonumber\\
 &:=\frac{1}{\pi^2} \lim_{\varepsilon_1,\varepsilon_2\to0^+}
 \int_{\substack{|x_1-y_1|>\varepsilon_1\\
|x_2-y_2|>\varepsilon_2}}
 \frac{f(y)}{(x_1-y_1)(x_2-y_2)}\,dy, 
\end{align}
for $f\in C_c^\infty(\R^2)$. The \emph{goal} of this paper is to study sparse bounds for $\HH$.
We call $R\subset\R^2$ a rectangle if
$R=[a,b)\times[c,d)$ for real numbers $a<b$ and $c<d$, and a finite or countable collection $\calS$ of rectangles is said to be
\emph{a sparse family of rectangles} if there are pairwise disjoint measurable sets
$E_R\subset R$ satisfying $|E_R|\ge |R|/2$ for every $R\in\calS$.
We write
$$
 \langle f \rangle_R:=\frac{1}{|R|}\int_R f
$$
for the average of $f$ over $R$, and define the \emph{$(1, 1)$-type sparse form} by
\begin{equation} \label{20260928eq01}
 \Lambda_{\calS}(f,g)
 :=\sum_{R\in\calS}|R|
 \langle |f|\rangle_R\langle |g|\rangle_R.
\end{equation} 

Our main result is the following.

\begin{thm}\label{thmmain}
There exist absolute constants $c,C>0$ such that, for every sufficiently
large integer $n$, there are real-valued functions
$f_n,g_n\in C_c^\infty((0,1)^2)$ satisfying
\begin{equation} \label{mainest01}
 \langle\HH f_n,g_n\rangle\ge c\sqrt n
\end{equation} 
and
\begin{equation} \label{mainest02}
 \Lambda_{\calS}(f_n, g_n)\le C
 \qquad\text{for every sparse family} \ \mathcal S \ \textrm{of rectangles} \  \textrm{in} \ \R^2.
\end{equation}
Consequently, there is \emph{no} absolute constant $A>0$ such that,
for every pair $f,g\in C_c^\infty(\R^2)$, there exists a sparse
family of rectangles $\calS$ satisfying
\begin{equation} \label{sparseform}
 |\langle \HH f,g\rangle|\le A\Lambda_{\calS}(f,g).
\end{equation} 
\end{thm}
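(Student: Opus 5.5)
The plan is to build $f_n$ and $g_n$ as signed sums of tiny bumps placed on two interleaved finite point sets $P,Q\subset(0,1)^2$, with $N\approx e^{n}$ points each, so that $n\approx\log N$. The sets come from the Minkowski embedding of $\Z[\sqrt2]$. Put
$$
\Gamma=\{(a+b\sqrt2,\;a-b\sqrt2):a,b\in\Z\},
$$
rescale $\Gamma$ by $N^{-1/2}$, and intersect it with a fixed compact subsquare of $(0,1)^2$; this gives $P$. Let $Q$ be $P$ translated by a generic shift taken from $N^{-1/2}(\Gamma\otimes\mathbb Q)$, for instance $N^{-1/2}\cdot\frac12(1,1)$. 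The arithmetic input is the norm form: $|(a+b\sqrt2)(a-b\sqrt2)|=|a^2-2b^2|\ge1$ for $(a,b)\neq0$, and the analogous bound holds for differences landing in the shifted coset. Consequently, for distinct $u,v\in P\cup Q$,
$$
|u_1-v_1|\,|u_2-v_2|\ \ge\ \frac{c_0}{N}.
$$
This is an admissible-lattice property, and it has a standard counting consequence: every axis-parallel rectangle $R$ contains at most $C(1+N|R|)$ points of $P\cup Q$. I would prove the counting bound by covering $R$ with about $1+N|R|$ rectangles of the same shape and area $c_0/N$. Each of these contains at most one point, since two points in it would violate the product bound.

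The functions are
$$
f_n=\frac1N\sum_{p\in P}\varepsilon_p\,\phi_\delta(\cdot-p),\qquad g_n=\frac1N\sum_{q\in Q}\eta_q\,\phi_\delta(\cdot-q).
$$
Here $\phi_\delta$ is an $L^1$-normalized bump supported in a $\delta$-square, with $\delta\ll N^{-1}$, and the signs $\varepsilon_p,\eta_q\in\{\pm1\}$ are still to be chosen.

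\textbf{Sparse bound \eqref{mainest02}.} The summand for $R$ is $|R|\langle|f_n|\rangle_R\langle|g_n|\rangle_R=|R|^{-1}\big(\int_R|f_n|\big)\big(\int_R|g_n|\big)$. Each factor is at most $N^{-1}$ times the number of bumps meeting $R$. Replacing $R$ by $R\cap[0,1]^2$ only increases this quantity, so I may assume $R\subset[0,1]^2$. I then split into two cases.

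\emph{Case $|R|\ge c_0/N$.} The counting bound gives a summand of at most
$$
|R|^{-1}N^{-2}\bigl(C(1+N|R|)\bigr)^2\lesssim |R|.
$$

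\emph{Case $|R|<c_0/N$.} A nonzero summand needs $R$ to meet a bump of $P$ and a bump of $Q$. Because bumps have size $\delta\ll N^{-1}$ and distinct points satisfy the product bound, such an $R$ essentially has to have area $\gtrsim 1/N$ unless it lies inside the $\delta$-neighbourhood of a single point. Since $P$ and $Q$ are disjoint, that degenerate situation cannot occur once $\delta$ is small. Making this precise needs a little care when one side of $R$ is shorter than $\delta$: in that case I count mass through $\int_R\phi_\delta\le\min(1,|R|/\delta^2)$.

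In both cases the summand is $\lesssim |R|\le 2|E_R|$. Summing over a sparse family then gives $\lesssim|[0,1]^2|=1$. I expect this uniform control over \emph{all} rectangles, including very long thin ones and rectangles only partially covering bumps, to be the main obstacle, and the norm-form counting lemma is exactly what has to carry it.

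\textbf{Lower bound \eqref{mainest01}.} The bumps are far apart compared with $\delta$, so
$$
\langle \HH f_n,g_n\rangle=\frac{1}{\pi^2N^2}\sum_{q}\eta_q\sum_p\varepsilon_p K(q-p)+\text{error},\qquad K(x)=\frac{1}{x_1x_2}.
$$
The error is controlled by smoothness of $K$ at the scale $\delta$, using $|(q-p)_1(q-p)_2|\ge c_0/N$. I choose $\eta_q=\operatorname{sgn}\sum_p\varepsilon_pK(q-p)$, which gives
$$
\langle \HH f_n,g_n\rangle\approx\frac{1}{\pi^2N^2}\sum_q\Bigl|\sum_p\varepsilon_pK(q-p)\Bigr|.
$$
Averaging over random $\varepsilon$ and applying Khintchine's inequality, the expectation is
$$
\gtrsim\frac{1}{N^2}\sum_q\Bigl(\sum_p K(q-p)^2\Bigr)^{1/2},
$$
so some choice of signs achieves at least this value.

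It remains to estimate $\sum_pK(q-p)^2$. Consider the hyperbolic shell $\{p:|(q-p)_1(q-p)_2|\sim 2^j/N\}$. By the unit group of $\Z[\sqrt2]$, generated by $1+\sqrt2$, this shell contains $\gtrsim 2^j\log N$ points for $q$ in the bulk. At least the $j=0$ shell, which has $\gtrsim\log N$ points, contributes $N^2\log N$. Hence $\sum_pK(q-p)^2\gtrsim N^2\log N$, and
$$
\langle \HH f_n,g_n\rangle\gtrsim \frac1{N^2}\cdot N\cdot N\sqrt{\log N}\approx\sqrt n .
$$
Finally, mollify so that $f_n,g_n\in C_c^\infty((0,1)^2)$ and are real-valued.

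\textbf{The consequence.} Suppose \eqref{sparseform} held with some constant $A$. Applied to $f_n,g_n$, it would give $c\sqrt n\le A\,\Lambda_{\calS}(f_n,g_n)\le AC$ for every large $n$, which is impossible.
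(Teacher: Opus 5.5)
Your construction and lower bound follow the paper. With $N^{-1/2}=2\lambda^{-n}$, your $P$ and $Q$ are, up to the choice of windows, the paper's $\textbf{Int}(n)$ and $\textbf{Out}(n)$. Your $j=0$ shell is the family $p_{n,z,j}$ of Lemma~\ref{20261001lem04}, and Khintchine plus a best choice of signs is the same mechanism. The problem is in the final summation of the sparse bound.

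\textbf{The gap.} Write $\sigma(R):=|R|^{-1}\big(\int_R|f_n|\big)\big(\int_R|g_n|\big)$ and $R':=R\cap[0,1]^2$. You correctly note $\sigma(R)\le\sigma(R')$, and your cases give $\sigma(R')\lesssim|R'|$. You then sum using $|R|\le 2|E_R|$ and conclude ``$\lesssim|[0,1]^2|$''. That would require the disjoint sets $E_R$ to lie in $[0,1]^2$, but they lie in $R$, which may be far larger. In fact $\sum_{R\in\calS}|R\cap[0,1]^2|$ is unbounded over sparse families. The family $R_k=[0,2^k)\times[0,1)$, $k\ge1$, is sparse with $E_{R_k}=[2^{k-1},2^k)\times[0,1)$. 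Yet $|R_k'|=1$ and $\sigma(R_k')=\|f_n\|_1\|g_n\|_1\simeq1$ for every $k$, so your bound diverges. The true value is $\sigma(R_k)=2^{-k}\|f_n\|_1\|g_n\|_1$, so the theorem itself is not threatened. Your reduction simply discards exactly the factor $|R|^{-1}$ that makes the sum converge.

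\textbf{The fix.} What your case analysis actually proves, applied to $R'$, is that $\int_R|f_n|\lesssim|R'|$ and $\int_R|g_n|\lesssim|R'|$ whenever both are nonzero. Hence $\sigma(R)\lesssim|R|\,(|R'|/|R|)^2$. To sum this you need the extra ingredient of Lemma~\ref{20261002lem02}: $|R'|/|R|\le W(x):=m(x_1)m(x_2)$ for every $x\in R$, where $m(s)=(1+\operatorname{dist}(s,[0,1]))^{-1}$. Then $\sigma(R)\lesssim|E_R|\inf_{E_R}W^2\le\int_{E_R}W^2$. Disjointness of the $E_R$ gives $\Lambda_{\calS}(f_n,g_n)\lesssim\int_{\R^2}W^2<\infty$. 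Since $W\notin L^1(\R^2)$, the quadratic decay is essential: you need one factor $|R'|/|R|$ from $f_n$ and one from $g_n$.

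\textbf{Smaller points.}
\begin{itemize}
\item Your Case~2 is simpler than you suggest. Suppose $R$ meets the bumps at $p\in P$ and $q\in Q$. Then $|p_i-q_i|\ge c_0/N$ for each $i$, since the other coordinate gap is at most $1$. So both sides of $R$ are $\ge c_0/N-2\delta\ge c_0/(2N)\gg\delta$. Thin rectangles never contribute, and Case~2 reduces to Case~1 with a smaller constant.
\item In the lower bound, ``$\delta\ll N^{-1}$'' must be quantitative. The relative kernel error per pair is $O(\delta N)$, so the crude total error is $O(\delta N^2)$; take, e.g., $\delta\le N^{-3}$.
\item State explicitly that the even powers $(1+\sqrt2)^{2j}=\lambda^j$ are the units that land in the coset $\frac12(1,1)+\Gamma$: they have odd rational part and even $\sqrt2$-part, while $1+\sqrt2$ itself does not. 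Also note that a positive proportion of $q\in Q$ lie far enough from the window boundary to see $\gtrsim\log N$ of them.
\end{itemize}
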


Sparse domination is an \emph{important} topic in harmonic analysis.
It provides a quantitative refinement of $L^p$ estimates for operators and has proved useful in obtaining weighted norm inequalities and endpoint estimates.
Much of this theory has been developed in the \emph{single}-parameter
setting, which dates back to the early work of Lerner \cite{Lerner2013a,Lerner2013b} established domination by positive
sparse operators in Banach function space norms and gave a
simple proof of the $A_2$ theorem. Subsequent developments
include both pointwise sparse bounds and estimates for the
forms associated with the operators. The literature in this direction is extensive, and we list several representative works for the reader's convenience.

\begin{enumerate}
\item[$\bullet$] Pointwise sparse bounds for Calder\'on--Zygmund
operators and positive dyadic shifts
(see, e.g., \cite{CondeAlonsoRey2016,Lacey2017,Lerner2016}).

\item[$\bullet$] Sparse form bounds for rough singular integrals
(see, e.g., \cite{CondeAlonsoCuliucDiPlinioOu2017}).

\item[$\bullet$] Sparse form bounds for multilinear singular
integrals, including the bilinear Hilbert transform
(see, e.g., \cite{CuliucDiPlinioOu2018}).

\item[$\bullet$] Sparse form bounds for singular Radon transforms
(see, e.g., \cite{CladekOu2018,Oberlin2019,Hu2020}).

\item[$\bullet$] Sparse form bounds for Fourier multipliers,
maximal operators, and square functions
(see, e.g.,
\cite{Lacey2019,LaceyMenaReguera2019,BeltranRoosSeeger2024}).

\item[$\bullet$] Applications of sparse and dyadic methods
in complex analysis
(see, e.g.,
\cite{AlemanPottReguera2017,RahmTchoundjaWick2017,
HuLiShiWick2021,GanHuKhan2022,HuHuoLanzaniPalenciaWagner2024}).

\item[$\bullet$] Generalizations to convex body domination
(see, e.g., \cite{NazarovPetermichlTreilVolberg2017}).
\end{enumerate}

In contrast, fewer results are available in the \emph{multi}-parameter
setting. An important result is due to Barron, Conde-Alonso, Ou,
and Rey \cite{BarronCondeAlonsoOuRey2019}, who proved that sparse domination of the form \eqref{sparseform} fails for the dyadic strong maximal function and cannot hold uniformly for bi-parameter
martingale transforms. More recently, Conde-Alonso, Lorist, and Rey
\cite{CondeAlonsoLoristRey2026} developed an elegant cancellative sparse
domination framework involving a maximal median operator,
which yields a pointwise bound for the cancellative dyadic
strong maximal operator.
For further developments in this direction, see
\cite{Rey2026,OmbrosiRey2026}.

\begin{rem}
Barron and Pipher \cite{BarronPipher2017} obtained sparse form
bounds involving square functions for bi-parameter martingale
transforms, cancellative dyadic shifts, and paraproduct-free
Journ\'e operators.
For bi-parameter martingale transforms, their bound takes
the form
\begin{equation} \label{squarefunctionsparse}
|\langle Tf,g\rangle|
\lesssim
\sum_{R\in\calS}|R|
\langle Sf\rangle_R\langle Sg\rangle_R,
\end{equation} 
where $S$ is the bi-parameter dyadic square function.
These estimates provide a square-function approach to sparse
domination in the bi-parameter setting.
Here, the square functions involve Haar coefficients over
rectangles not necessarily contained in $R$, so the averages
$\langle Sf\rangle_R$ and $\langle Sg\rangle_R$ may depend on
the values of $f$ and $g$ outside $R$. In this sense,
\eqref{squarefunctionsparse} lacks the locality required by the usual
sparse domination estimates.
\end{rem}

Returning to our main result, to the best of our knowledge,
Theorem~\ref{thmmain} provides the \emph{first} counterexample to the $(1, 1)$--type sparse form domination of the form \eqref{20260928eq01} 
for a classical bi-parameter singular integral operator. 

\vspace{0.1cm}

The \emph{key} ingredient of our construction is to make use of a pair of finite lattices obtained from the arithmetic of $\Z[\sqrt2]$ (see, Section \ref{20260928subsec01}). Their separation properties give the uniform estimate \eqref{mainest02} for every
sparse form (see, Section \ref{20261003subsecupper}), while each output point corresponds to sufficiently many
input points with large Hilbert kernel contributions (see, Lemma \ref{20261001lem04} and  \eqref{20261001eq23}).
To this end, a randomization argument yields the desired lower bound \eqref{mainest01}.
Finally, in Section~\ref{Sec04}, we show that the counterexample
constructed in Theorem~\ref{thmmain} does \emph{not} demonstrate
the failure of sparse domination of the form \eqref{sparseform}
for the strong maximal operator, as established by Barron,
Conde-Alonso, Ou, and Rey \cite{BarronCondeAlonsoOuRey2019}.

\begin{rem}
As suggested by Conde-Alonso, Lorist, and Rey
\cite{CondeAlonsoLoristRey2026}, an interesting question is
\emph{whether one can establish a suitable sparse bound for the
bi-parameter Hilbert transform $\HH$}.
In particular, it would be useful to obtain such a bound from which its $L^p$ and endpoint estimates follow as immediate consequences. The present paper does \emph{not} provide a positive result to thie question, but as pointed out in \cite{CondeAlonsoLoristRey2026}, the cancellative sparse domination approach developed offers a possible direction worth exploring. We hope that our counterexample sheds some light on this question.
\end{rem}

The rest of the paper is organized as follows.
In Section~\ref{Sec02}, we construct the functions $f_n$ and $g_n$.
In Section~\ref{Sec03}, we prove the estimates
\eqref{mainest01} and \eqref{mainest02}, completing the proof
of Theorem~\ref{thmmain}.
Finally, in Section~\ref{Sec04}, we explain why this construction
does not yield a counterexample to sparse domination for
the strong maximal operator.

Throughout this paper, for any two nonnegative quantities $a$ and $b$, we write $a \lesssim b$ if there exists an absolute constant $C > 0$ independent of $a$ and $b$ such that $a \le Cb$. The notation $a \simeq b$ indicates that both $a \lesssim b$ and $b \lesssim a$ hold simultaneously.

\bigskip

\section{Proof of Theorem \ref{thmmain}: Construction of $f_n$ and $g_n$} \label{Sec02}

Fix a sufficiently large integer $n$. We begin by briefly outlining the construction of $f_n$ and $g_n$ as follows. 

\begin{enumerate}
\item Construct two finite lattices $\textbf{Int}(n), \textbf{Out}(n) \subseteq [0, 1]^2$.

\item Place a nonnegative smooth bump near each point of these sets.

\item Form $f_n$ by assigning random signs to the bumps centered at $\textbf{Int}(n)$, choosing the signs to maximize
the integral of $|\HH f_n|$ against the sum of the bumps centered at $\textbf{Out}(n)$.

\item Construct $g_n$ from the bumps centered at $\textbf{Out}(n)$, with its sign chosen to agree with that of $\HH f_n$.
\end{enumerate}

Now we turn to some details. 

\subsection{Step I: Properties for the  lattices $\textbf{Int}(n)$ and $\textbf{Out}(n)$}\label{20260928subsec01}

Set $\lambda=3+2\sqrt2$, and define the lattices
$$
\textbf{Int}(n)
=\left\{\frac{ 2\left(a+b\sqrt2, \, a-b\sqrt2 \right)}{\lambda^n}:a,b\in\Z \right\}
  \cap \left[\frac{1}{8}, \frac{7}{8} \right]^2, 
$$
and
$$
\textbf{Out}(n)
=\left\{\frac{\left(2a+1+2b\sqrt2,\,2a+1-2b\sqrt2 \right)}{\lambda^n}:a,b\in\Z\right\}
  \cap \left[\frac{1}{4}, \frac{1}{2} \right]^2. 
$$
Note that both $\textbf{Int}(n)$ and $\textbf{Out}(n)$ are finite. The following lemmas are \emph{crucial} in the proof of Theorem \ref{thmmain}. More precisely, the first and fourth will yield the lower bound for
$\langle\HH f_n,g_n\rangle$, while the second and third will be used
to bound the sparse form.

\begin{lem}\label{20260928lem01}
There is an absolute constant $c_0>0$ such that
\begin{equation}\label{20260929eq01}
\#\textnormal{\textbf{Out}}(n)\ge c_0\lambda^{2n}.
\end{equation}
\end{lem}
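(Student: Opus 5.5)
We count lattice points of a linear image of a coset of $\Z^2$ inside a fixed square. Consider the linear map
$$
T(a,b)=\frac{\left(2a+2b\sqrt2,\,2a-2b\sqrt2\right)}{\lambda^n},
$$
so that $\textbf{Out}(n)=\left(T(\Z^2)+v\right)\cap[\tfrac14,\tfrac12]^2$ with $v=(1,1)/\lambda^n$. The image $\Gamma:=T(\Z^2)$ is a lattice in $\R^2$, since the determinant of $T$ is
$$
\det\begin{pmatrix}2 & 2\sqrt2\\ 2 & -2\sqrt2\end{pmatrix}\lambda^{-2n}=-8\sqrt2\,\lambda^{-2n}\neq0 .
$$
Its fundamental domain $P:=T([0,1)^2)$ therefore has area $8\sqrt2\,\lambda^{-2n}$.

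The plan is a standard tiling/volume count. The translates $\gamma+v+P$, for $\gamma\in\Gamma$, tile $\R^2$, and each has diameter at most $d:=(4+4\sqrt2)\lambda^{-n}$. Let $Q=[\tfrac14,\tfrac12]^2$, and let $Q'$ be the square concentric with $Q$ of side length $\tfrac14-2d$, which is positive for $n$ large.

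Every point of $Q'$ lies in some tile $\gamma+v+P$. That tile contains its base point $\gamma+v$ and has diameter at most $d$, so $\gamma+v$ lies within distance $d$ of $Q'$ and hence in $Q$. So the tiles whose base points lie in $Q$ cover $Q'$, and comparing areas gives
$$
\#\textbf{Out}(n)\cdot 8\sqrt2\,\lambda^{-2n}\ \ge\ |Q'|=\left(\tfrac14-2d\right)^2\ \ge\ \tfrac1{64}
$$
for $n$ sufficiently large. This yields \eqref{20260929eq01} with $c_0=1/(512\sqrt2)$.

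For the finitely many small $n$, the claim holds as long as $\textbf{Out}(n)$ is nonempty. Alternatively one can restrict to sufficiently large $n$, which is all the theorem uses, or shrink $c_0$.

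There is no real obstacle. The only points to check are that $T$ is nondegenerate (it is, because $\sqrt2$ is irrational, and in fact $\det T\neq0$ directly) and that the tile diameter is $O(\lambda^{-n})$. The parity shift by $v$ plays no role in the counting.
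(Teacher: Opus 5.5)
Your argument is correct, but it takes a different route from the paper's. You count in the image plane; the paper counts in the parameter plane and never uses fundamental domains.

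The paper takes the box ${\bf M}=\{(a,b)\in\Z^2:\ |2a\lambda^{-n}-\tfrac38|\le\tfrac1{32},\ |2\sqrt2\,b\lambda^{-n}|\le\tfrac1{32}\}$. By the triangle inequality, the map $(a,b)\mapsto (2a+1\pm2b\sqrt2)\lambda^{-n}$ sends ${\bf M}$ into $[\tfrac14,\tfrac12]^2$ for large $n$, and the map is injective because its matrix is invertible. Then $\#{\bf M}$ is a product of counts of integers in two intervals of length $\simeq\lambda^n$. This works because the constraints on $a$ and $b$ decouple, so nothing beyond counting integers in intervals is needed, though the constant is left implicit.

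Your tiling and area comparison is more general: it works for any full-rank lattice coset and any square that is large compared with the tile diameter. It also produces the explicit constant $c_0=1/(512\sqrt2)$. Running the same argument in reverse gives the matching upper bound $\#\textnormal{\textbf{Out}}(n)\lesssim\lambda^{2n}$: the disjoint tiles with base points in $Q$ lie in the $d$-enlargement of $Q$.

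One correction concerns your remark on small $n$: $\textnormal{\textbf{Out}}(0)$ and $\textnormal{\textbf{Out}}(1)$ are in fact empty. The sum of the two coordinates, $2(2a+1)\lambda^{-n}$, would have to lie in $[\tfrac12,1]$, which forces an odd integer into $[\lambda^n/4,\lambda^n/2]$. That is impossible for $n=0$, and for $n=1$ since $\lambda/4\approx1.46$ and $\lambda/2\approx2.91$. So shrinking $c_0$ cannot rescue all $n$. Restricting to sufficiently large $n$, as the paper does throughout, is necessary rather than one option among several.
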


\begin{proof}
Let $z=(z_1,z_2)\in\textnormal{\textbf{Out}}(n)$. Then there are
$a,b\in\Z$ such that
$$
z_1=\frac{2a+1+2b\sqrt2}{\lambda^n},
\qquad \textrm{and} \qquad 
z_2=\frac{2a+1-2b\sqrt2}{\lambda^n}.
$$
Now define
$$
{\bf M}:=\left\{(a,b)\in\Z^2:
\left|\frac{2a}{\lambda^n}-\frac38\right|\le\frac1{32},
\quad
\left|\frac{2b\sqrt2}{\lambda^n}\right|\le\frac1{32}
\right\}.
$$
We claim that the map $\mathcal T:{\bf M}\to\textnormal{\textbf{Out}}(n)$
given by
$$
\mathcal T(a,b)
:=(a,b){\bf T}
 +\left(\frac1{\lambda^n},\frac1{\lambda^n}\right),
\qquad
{\bf T}:=
\begin{bmatrix}
\frac2{\lambda^n} & \frac2{\lambda^n}\\
\frac{2\sqrt2}{\lambda^n} & -\frac{2\sqrt2}{\lambda^n}
\end{bmatrix},
$$
is injective. Indeed, injectivity follows from the invertibility of
${\bf T}$, and hence it remains to check that
$\mathcal T({\bf M})\subseteq\textnormal{\textbf{Out}}(n)$.
For $(a,b)\in{\bf M}$, write $z=\mathcal T(a,b)$. Then, for $i=1,2$,
$$
\begin{aligned}
\left|z_i-\frac38\right|
&=\left|\frac{2a+1\pm2b\sqrt2}{\lambda^n}-\frac38\right|\\
&\le
\left|\frac{2a}{\lambda^n}-\frac38\right|
+\left|\frac{2b\sqrt2}{\lambda^n}\right|
+\frac1{\lambda^n}\\
&\le\frac1{16}+\frac1{\lambda^n}<\frac18
\end{aligned}
$$
for sufficiently large $n$. Thus $z\in[1/4,1/2]^2$, proving the claim. As a consequence, we have 
$\#\textnormal{\textbf{Out}}(n)\ge\#{\bf M}
\gtrsim\lambda^{2n}$, which gives \eqref{20260929eq01}.
\end{proof}

\begin{lem}\label{20260928lem02}
Let ${\bf u}=(u_1,u_2)$ and ${\bf v}=(v_1,v_2)$ be distinct points belonging
to the same lattice $\textnormal{\textbf{Int}}(n)$ or
$\textnormal{\textbf{Out}}(n)$. Then
\begin{equation}\label{20260929eq04}
|u_1-v_1|\,|u_2-v_2|\ge\frac{4}{\lambda^{2n}}.
\end{equation}
Consequently, for every rectangle $R \subseteq \R^2$,
\begin{equation}\label{20260929eq05}
\#\big(\textnormal{\textbf{Int}}(n)\cap R\big), \quad 
\#\big(\textnormal{\textbf{Out}}(n)\cap R\big) \le 1+\frac{\lambda^{2n}|R|}{4}.
\end{equation}
\end{lem}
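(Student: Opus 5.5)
The plan is to exploit the norm form of $\Z[\sqrt2]$. For distinct ${\bf u},{\bf v}$ in the same lattice, the differences in both cases have the shape
$$
u_1-v_1=\frac{2(\alpha+\beta\sqrt2)}{\lambda^n},\qquad u_2-v_2=\frac{2(\alpha-\beta\sqrt2)}{\lambda^n},
$$
with $(\alpha,\beta)\in\Z^2\setminus\{(0,0)\}$. For $\textbf{Int}(n)$ this is immediate with $\alpha=a-a'$, $\beta=b-b'$. For $\textbf{Out}(n)$, the odd constants $1$ cancel, leaving $2(a-a')\pm 2(b-b')\sqrt2$. Then
$$
|u_1-v_1|\,|u_2-v_2|=\frac{4\,|\alpha^2-2\beta^2|}{\lambda^{2n}}.
$$
Since $\sqrt2$ is irrational, $\alpha^2-2\beta^2$ is a nonzero integer when $(\alpha,\beta)\neq(0,0)$: if $\alpha^2=2\beta^2$ with $\beta\ne0$, then $\sqrt2=|\alpha/\beta|$ would be rational. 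Hence $|\alpha^2-2\beta^2|\ge1$, which gives \eqref{20260929eq04}. Distinctness of ${\bf u},{\bf v}$ forces $(\alpha,\beta)\ne(0,0)$, since the map $(a,b)\mapsto$ point is injective (the matrix ${\bf T}$ is invertible).

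For \eqref{20260929eq05}, the plan is a packing/volume argument. Let $R=[a,b)\times[c,d)$, and let the points of the lattice inside $R$ be ${\bf p}^1,\dots,{\bf p}^N$. Around each point I place the open rectangle
$$
Q_j={\bf p}^j+\left(-\tfrac{\delta_1}{2},\tfrac{\delta_1}{2}\right)\times\left(-\tfrac{\delta_2}{2},\tfrac{\delta_2}{2}\right),
$$
with $\delta_1\delta_2=4\lambda^{-2n}$ and an aspect ratio matched to $R$, say $\delta_1/\delta_2=(b-a)/(d-c)$. Two such boxes overlap only if $|p^j_1-p^k_1|<\delta_1$ and $|p^j_2-p^k_2|<\delta_2$. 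That would give a product $<\delta_1\delta_2=4\lambda^{-2n}$, contradicting \eqref{20260929eq04}. So the boxes are disjoint.

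Each $Q_j$ lies in the enlargement of $R$ by $\delta_1/2$ horizontally and $\delta_2/2$ vertically. Comparing areas then gives
$$
N\cdot\frac{4}{\lambda^{2n}}\le (b-a+\delta_1)(d-c+\delta_2).
$$
With the proportional choice, $b-a=t\delta_1$ and $d-c=t\delta_2$, where $t^2=\lambda^{2n}|R|/4$. This yields only $N\le(t+1)^2$, which is weaker than claimed.

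The main obstacle is therefore getting the sharp form $1+\lambda^{2n}|R|/4$. For that I would argue in one dimension instead. Order the points by first coordinate. The first coordinates are distinct, since equal first coordinates would make the product zero, contradicting \eqref{20260929eq04}. Take consecutive points ${\bf p}^{j},{\bf p}^{j+1}$. Their second coordinates differ by less than $d-c$, because both lie in $[c,d)$. Then \eqref{20260929eq04} gives
$$
p^{j+1}_1-p^j_1>\frac{4}{\lambda^{2n}(d-c)}.
$$
Summing the $N-1$ gaps, all of which lie within the interval $[a,b)$, gives
$$
(N-1)\frac{4}{\lambda^{2n}(d-c)}<b-a,
$$
that is, $N<1+\lambda^{2n}|R|/4$. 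This is exactly \eqref{20260929eq05} and supersedes the packing attempt. The case $N\le1$ is trivial.
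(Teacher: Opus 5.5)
Your proof is correct and follows the paper's argument. The product bound comes from $|\alpha^2-2\beta^2|\ge1$ for the parameter difference $(\alpha,\beta)\ne(0,0)$, and the counting bound comes from ordering the points by first coordinate and summing the consecutive gaps $p^{j+1}_1-p^j_1\ge 4/(\lambda^{2n}(d-c))$, so you can drop the packing detour entirely. One small point: $(\alpha,\beta)\ne(0,0)$ holds simply because $(\alpha,\beta)=(0,0)$ would force ${\bf u}={\bf v}$, and the injectivity of the parametrization plays no role there.
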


\begin{proof}
We first show \eqref{20260929eq04}. By the definitions of $\textnormal{\textbf{Int}}(n)$ and
$\textnormal{\textbf{Out}}(n)$, there are integers $a,b$,
not both zero, such that
$$
u_1-v_1=\frac{2(a+b\sqrt2)}{\lambda^n},
\qquad
u_2-v_2=\frac{2(a-b\sqrt2)}{\lambda^n}.
$$
Since $\sqrt2$ is irrational, $a^2-2b^2$ is a nonzero integer.
Therefore,
$$
|u_1-v_1|\,|u_2-v_2|
=\frac{4|a^2-2b^2|}{\lambda^{2n}}
\ge\frac{4}{\lambda^{2n}},
$$
which proves \eqref{20260929eq04}.

\vspace{0.1cm}

Next we prove \eqref{20260929eq05}. Let $R$ be a rectangle of width $w$ and height $h$.
We first prove the estimate for
$\textnormal{\textbf{Int}}(n)$ in \eqref{20260929eq05}.
Write
$$
m:=\#\big(\textnormal{\textbf{Int}}(n)\cap R\big).
$$
Without loss of generality, we may assume $m \ge 2$, otherwise there is nothing to prove. By \eqref{20260929eq04}, if ${\bf u}, {\bf v} \in \textnormal{\textbf{Int}}(n)$, then $u_1 \neq v_1$. Thus we may order the points in
$\textnormal{\textbf{Int}}(n)\cap R$ as
${\bf u}^{(1)},\ldots,{\bf u}^{(m)}$, with
$$
u^{(1)}_1<u^{(2)}_1<\cdots<u^{(m)}_1.
$$
For $j=1,\ldots,m-1$, both $u^{(j)}$ and $u^{(j+1)}$ belong to
$R$, so $|u^{(j+1)}_2-u^{(j)}_2|\le h$. Hence, by 
\eqref{20260929eq04}, we have 
$$
\big(u^{(j+1)}_1-u^{(j)}_1\big)h
\ge
\big(u^{(j+1)}_1-u^{(j)}_1\big)
\big|u^{(j+1)}_2-u^{(j)}_2\big|
\ge\frac{4}{\lambda^{2n}}.
$$
Summing over $j$, we obtain
$$
(m-1)\cdot \frac{4}{\lambda^{2n}h}
\le
\sum_{j=1}^{m-1}\big(u^{(j+1)}_1-u^{(j)}_1\big)
=u^{(m)}_1-u^{(1)}_1
\le w.
$$
Since $|R|=wh$, this proves the first estimate in
\eqref{20260929eq05}. The same argument applies to
$\textnormal{\textbf{Out}}(n)$.
\end{proof}

\begin{lem}\label{20260928lem03}
Let $p=(p_1,p_2)\in\textnormal{\textbf{Int}}(n)$ and
$z=(z_1,z_2)\in\textnormal{\textbf{Out}}(n)$. Then
\begin{equation}\label{eq:cross}
|p_1-z_1|\,|p_2-z_2|\ge\frac{1}{\lambda^{2n}}.
\end{equation}
\end{lem}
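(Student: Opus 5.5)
We compute the difference $p-z$ directly from the definitions and then use the arithmetic of $\Z[\sqrt2]$, as in the proof of Lemma~\ref{20260928lem02}. Write
\[
p=\frac{\bigl(2a+2b\sqrt2,\;2a-2b\sqrt2\bigr)}{\lambda^n},
\qquad
z=\frac{\bigl(2a'+1+2b'\sqrt2,\;2a'+1-2b'\sqrt2\bigr)}{\lambda^n}
\]
with $a,b,a',b'\in\Z$. Then
\[
p_1-z_1=\frac{A+B\sqrt2}{\lambda^n},
\qquad
p_2-z_2=\frac{A-B\sqrt2}{\lambda^n},
\]
where $A:=2(a-a')-1$ is an \emph{odd} integer and $B:=2(b-b')$ is an \emph{even} integer. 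Consequently
\[
|p_1-z_1|\,|p_2-z_2|=\frac{|A^2-2B^2|}{\lambda^{2n}}.
\]
Note that the two lattices are never intersecting, since the parity of the rational part differs.

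It then suffices to show that $A^2-2B^2\neq 0$. A cleaner parity argument gives this immediately: $A$ is odd, so $A^2$ is odd, while $2B^2$ is even. Hence $A^2-2B^2$ is an odd integer, and in particular $|A^2-2B^2|\ge1$. (Alternatively, irrationality of $\sqrt2$ also rules out $A^2=2B^2$ unless $A=B=0$, which is impossible since $A$ is odd.) This yields \eqref{eq:cross}.

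There is essentially no obstacle here. The only point to check is the bookkeeping showing that the rational parts of $p$ and $z$ have opposite parity, namely $2a$ versus $2a'+1$. This is exactly why $\textbf{Out}(n)$ was defined with the shift $+1$: the shift guarantees that the norm $A^2-2B^2$ of the difference never vanishes. The weaker constant $1$, compared with the $4$ in \eqref{20260929eq04}, reflects that we no longer have a common factor of $2$ in both coordinates.
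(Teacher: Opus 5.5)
Your proof is correct and is essentially the paper's argument: both write $p-z=(A\pm B\sqrt2)/\lambda^n$ with $A$ odd and $B$ even. Both then conclude because $A^2-2B^2$, which the paper writes as $(2a-1)^2-8b^2$, is an odd integer and hence has absolute value at least $1$.
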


\begin{proof}
By the definitions of $\textnormal{\textbf{Int}}(n)$ and
$\textnormal{\textbf{Out}}(n)$, there exist $a,b \in \Z$ such that
$$
p_1-z_1=\frac{2a-1+2b\sqrt2}{\lambda^n},
\qquad
p_2-z_2=\frac{2a-1-2b\sqrt2}{\lambda^n}.
$$
Consequently,
$$
|p_1-z_1|\,|p_2-z_2|
=\frac{\big|(2a-1)^2-8b^2\big|}{\lambda^{2n}}
\ge\frac{1}{\lambda^{2n}},
$$
since $(2a-1)^2-8b^2$ is an odd integer. The proof is complete. 
\end{proof}

\begin{lem}\label{20261001lem04}
Let $z=(z_1,z_2)\in\textnormal{\textbf{Out}}(n)$, and define
\begin{equation}\label{20261001eq01}
p_{n,z,j}:=z+\left(\lambda^{j-n},\lambda^{-j-n}\right),
\qquad j=1,\ldots,n-2.
\end{equation}
Then $p_{n, z, j} \in \textnormal{\textbf{Int}}(n)$. 
\end{lem}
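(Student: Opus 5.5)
Write $z=\lambda^{-n}(2a+1+2b\sqrt2,\;2a+1-2b\sqrt2)$ with $a,b\in\Z$. The argument has two parts: an arithmetic part, showing that $p_{n,z,j}$ lies in the ambient lattice defining $\textbf{Int}(n)$, and a geometric part, showing that $p_{n,z,j}$ lies in $[1/8,7/8]^2$.

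\emph{Arithmetic part.} The key observation is that $\lambda=3+2\sqrt2=(1+\sqrt2)^2$ is a unit of $\Z[\sqrt2]$ whose conjugate is $\bar\lambda=3-2\sqrt2=\lambda^{-1}$. For $j\ge1$, write $\lambda^j=c_j+d_j\sqrt2$ with $c_j,d_j\in\Z$. Then $\lambda^{-j}=c_j-d_j\sqrt2$, so
$$
\left(\lambda^{j-n},\lambda^{-j-n}\right)=\frac{\left(c_j+d_j\sqrt2,\;c_j-d_j\sqrt2\right)}{\lambda^n}.
$$
Adding $z$, the numerator of $p_{n,z,j}$ becomes $\bigl(2a+1+c_j+(2b+d_j)\sqrt2,\;2a+1+c_j-(2b+d_j)\sqrt2\bigr)$. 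To match the form $2(a'+b'\sqrt2,\,a'-b'\sqrt2)$ required for $\textbf{Int}(n)$, we need $c_j$ odd and $d_j$ even. Both parities follow by induction from $(c_{j+1},d_{j+1})=(3c_j+4d_j,\;2c_j+3d_j)$ with $(c_1,d_1)=(3,2)$: if $c_j$ is odd and $d_j$ is even, then $3c_j+4d_j$ is odd and $2c_j+3d_j$ is even. Equivalently, $\lambda\equiv1 \pmod 2$ in $\Z[\sqrt2]$. With these parities, $2a+1+c_j=2a'$ and $2b+d_j=2b'$ for integers $a',b'$, so $p_{n,z,j}$ has the required form.

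\emph{Geometric part.} Since $z\in[1/4,1/2]^2$, it suffices to show that both coordinate shifts $\lambda^{j-n}$ and $\lambda^{-j-n}$ are positive and at most $3/8$. For $1\le j\le n-2$ we have $\lambda^{j-n}\le\lambda^{-2}=(3-2\sqrt2)^2\approx0.029<3/8$. Likewise $\lambda^{-j-n}\le\lambda^{-n-1}$, which is tiny. Hence both coordinates of $p_{n,z,j}$ lie in $[1/4,\,1/2+0.03]\subset[1/8,7/8]$.

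The only genuinely non-routine step is the parity of $c_j,d_j$, and it is handled by the induction above. Everything else is bookkeeping, so no real obstacle is expected.
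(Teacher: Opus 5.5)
Your proof is correct and follows essentially the same route as the paper. You write $\lambda^{\pm j}=c_j\pm d_j\sqrt2$, obtain $c_j$ odd and $d_j$ even from the recurrence $(c_{j+1},d_{j+1})=(3c_j+4d_j,\,2c_j+3d_j)$, and then check that both shifts are small enough to keep $p_{n,z,j}$ inside $[1/8,7/8]^2$.
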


\begin{proof}
Recall $\lambda=3+2\sqrt{2}$. Hence, for each integer $j\ge1$, one has
$$
\lambda^j=A_j+B_j\sqrt2,
\qquad \textrm{for some} \quad A_j,B_j\in\Z.
$$
It is clear that $(A_1, B_1)=(3, 2)$. Moreover, we have the following elementary observations: for each $j \ge 1$, 
\begin{enumerate}
\item  $A_{j+1}=3A_j+4B_j$ and $B_{j+1}=2A_j+3B_j$;
    \item $\lambda^{-j}=(3-2\sqrt{2})^j=A_j-B_j\sqrt{2}$;
    \item $A_j$ is odd and $B_j$ is even.
\end{enumerate}

\vspace{0.1cm}

\noindent Now take any $z\in\textnormal{\textbf{Out}}(n)$, and write
$$
z=\frac{(2a+1+2b\sqrt2,\,2a+1-2b\sqrt2)}{\lambda^n}.
$$
for some $a, b \in \Z$. Therefore, for any $j=1,\ldots,n-2$, we have 
$$
p_{n,z,j}=\frac{\big(2a+1+A_j+(2b+B_j)\sqrt2,\,2a+1+A_j-(2b+B_j)\sqrt2\big)
}{\lambda^n}.
$$
Since $A_j$ is odd and $B_j$ is even, we have 
$$
a':=a+\frac{A_j+1}{2},
\qquad
b':=b+\frac{B_j}{2}
$$
are integers, which gives 
$$
p_{n,z,j}=\frac{2(a'+b'\sqrt2,\,a'-b'\sqrt2)}{\lambda^n}.
$$
Hence, to show $p_{n, z, j} \in \textnormal{\textbf{Int}}(n)$, it remains to verify that $p_{n,z,j}\in[1/8,7/8]^2$. Indeed, for $1\le j\le n-2$, it is easy to see that 
$$
0<\lambda^{j-n}\le\lambda^{-2}<\frac1{16},
\qquad \textrm{and} \qquad 
0<\lambda^{-j-n}\le\lambda^{-n-1}<\frac1{16}, 
$$
which, together with the fact that $z\in[1/4,1/2]^2$, gives
$p_{n,z,j}\in[1/4,9/16]^2\subset[1/8,7/8]^2$. The proof is complete. 
\end{proof}

\subsection{Step II: Construction of the bumps}

Fix a nonnegative function
$\phi\in C_c^\infty((-1/10,1/10)^2)$ satisfying $\int_{\R^2}\phi(x)\,dx=1$.
For each
$u\in\textnormal{\textbf{Int}}(n)\cup
\textnormal{\textbf{Out}}(n)$, define the bump 
$$
\psi_{n,u}(x)
:=\lambda^{2n}\phi\big(\lambda^{2n}(x-u)\big).
$$
Observe that $\left\| \psi_{n, u}\right\|_{L^1(\R^2)}=\lambda^{-2n}$. Moreover,
$\supp \; \psi_{n,u}
\subseteq u+\left[-\frac{1}{10\lambda^{2n}},
\frac{1}{10\lambda^{2n}}\right]^2$. 
Since all the centers belong to $[1/8,7/8]^2$, we conclude that each $\psi_{n,u}$ is smooth and compactly supported in $(0,1)^2$.

\vspace{0.1cm}

Now define
$$
\mu_n:=\sum_{p\in\textnormal{\textbf{Int}}(n)}\psi_{n,p},
\qquad
\nu_n:=\sum_{z\in\textnormal{\textbf{Out}}(n)}\psi_{n,z}.
$$
Note that both sums above are finite, and hence $\mu_n$ and $\nu_n$ are nonnegative
functions in $C_c^\infty((0,1)^2)$. By
Lemma~\ref{20260928lem01}, we also have
\begin{equation}\label{20261001eq02}
\int_{\R^2}\nu_n(x)\,dx
=\frac{\#\textnormal{\textbf{Out}}(n)}{\lambda^{2n}}
\ge c_0.
\end{equation}

\subsection{Step III: Construction of $f_n$}

Let $\varepsilon=(\varepsilon_p)_{p\in\textnormal{\textbf{Int}}(n)}$
be a family of independent Rademacher random variables, and define
$$
f_{n,\varepsilon}
:=\sum_{p\in\textnormal{\textbf{Int}}(n)}
\varepsilon_p\psi_{n,p}.
$$
For every choice of signs, it is clear that 
$f_{n,\varepsilon}\in C_c^\infty((0,1)^2)$ and
$|f_{n,\varepsilon}(x)|\le\mu_n(x), \; x\in\R^2$. 

\vspace{0.1cm}

We now choose the signs to maximize the quantity
\begin{equation}\label{20261001eq03}
J_n(\varepsilon)
:=\int_{\R^2}|\HH f_{n,\varepsilon}(x)|\nu_n(x)\,dx.
\end{equation}
We first show that, for each fixed $n$, $J_n$ is bounded uniformly
over all choices of signs. By \eqref{BPHTeq01}, we have
$$
\begin{aligned}
J_n(\varepsilon)
&=\int_{\supp \; \nu_n}
|\HH f_{n,\varepsilon}(x)|\nu_n(x)\,dx\\
&\lesssim 
\int_{\supp \; \nu_n}\int_{\supp \; \mu_n}
\frac{|f_{n,\varepsilon}(y)|\nu_n(x)}
{|x_1-y_1|\,|x_2-y_2|}\,dy\,dx\\
&\lesssim
\int_{\supp \; \nu_n}\int_{\supp \; \mu_n}
\frac{\mu_n(y)\nu_n(x)}
{|x_1-y_1|\,|x_2-y_2|}\,dy\,dx.
\end{aligned}
$$
To estimate the last integral, take $x\in \supp \; \nu_n$ and
$y\in \supp \; \mu_n$. There are
$z\in\textnormal{\textbf{Out}}(n)$ and
$p\in\textnormal{\textbf{Int}}(n)$ such that
$x\in\supp \; \psi_{n,z}$ and $y\in\supp \; \psi_{n,p}$.
Since $p,z\in[0,1]^2$, by Lemma~\ref{20260928lem03}, we have 
$$
|p_i-z_i|
\ge |p_1-z_1|\,|p_2-z_2|
\ge\frac{1}{\lambda^{2n}},
\qquad i=1,2.
$$
Thus, by the support condition on the bumps,
\begin{align} \label{20261001eq05X}
|x_i-y_i|
&\ge |z_i-p_i|-|x_i-z_i|-|y_i-p_i| \nonumber \\
&\ge\frac{1}{\lambda^{2n}}
-\frac{1}{10\lambda^{2n}}
-\frac{1}{10\lambda^{2n}}
=\frac{4}{5\lambda^{2n}},
\qquad i=1,2.
\end{align}

Substituting these estimates into the preceding integral, we obtain
\begin{align} \label{20261001eq05}
J_n(\varepsilon)
&\lesssim \lambda^{4n} 
\left(\int_{\R^2}\mu_n(y)\,dy\right)
\left(\int_{\R^2}\nu_n(x)\,dx\right) \nonumber \\
&=\#\textnormal{\textbf{Int}}(n)\,
\#\textnormal{\textbf{Out}}(n)
<\infty.
\end{align}

\vspace{0.1cm}

We also need the smoothness of $\HH f_{n,\varepsilon}$ near
$\supp \; \nu_n$. Indeed, by the argument in \eqref{20261001eq05X}, we can take 
an open neighborhood $U_n$ of $\supp \; \nu_n$ such that
$$
|x_i-y_i|\ge\frac{1}{2\lambda^{2n}},
\qquad
x\in U_n,\quad y\in\supp \; \mu_n,\quad i=1,2.
$$
Hence, for $x \in U_n$, since $\supp \; f_{n, \varepsilon} \subseteq \supp \; \mu_n$, we see that 
\begin{align} \label{20261001eq20}
\HH f_{n,\varepsilon}(x)
&=\frac{1}{\pi^2}
\lim_{\varepsilon_1,\varepsilon_2\to0^+}
\int_{\substack{|x_1-y_1|>\varepsilon_1 \nonumber \\
|x_2-y_2|>\varepsilon_2}}
\frac{f_{n,\varepsilon}(y)}
{(x_1-y_1)(x_2-y_2)}\,dy\\
&=\frac{1}{\pi^2}\int_{\R^2}
\frac{f_{n,\varepsilon}(y)}
{(x_1-y_1)(x_2-y_2)}\,dy.
\end{align}

Arguing as in \eqref{20261001eq05}, we see that the above integral
is absolutely convergent. Moreover, a similar reasoning also shows \eqref{20261001eq20} is differentiable on $U_n$ up to every order. Hence $\HH f_{n,\varepsilon}$ is smooth on $U_n$.

\vspace{0.1cm}

Finally, since $\textnormal{\textbf{Int}}(n)$ is finite, there are only
finitely many choices of signs. We may therefore choose
$\varepsilon^{(n)}$ such that
$$
J_n(\varepsilon^{(n)})
=\max_{\varepsilon}J_n(\varepsilon),
$$
and define
\begin{equation}\label{20260928inputfcn}
f_n:=f_{n,\varepsilon^{(n)}}
=\sum_{p\in\textnormal{\textbf{Int}}(n)}
\varepsilon_p^{(n)}\psi_{n,p}.
\end{equation}
This completes the construction of $f_n$. 

\subsection{Step IV: Construction of $g_n$}

Let
\begin{equation} \label{20261002eq01}
h_n:=\HH f_n\quad\textrm{on }U_n.
\end{equation} 
By \eqref{20261001eq20}, $h_n$ is a smooth function on $U_n$. Define
$$
g_n(x):=
\begin{cases}
\displaystyle
\nu_n(x)\frac{h_n(x)}{\sqrt{1+h_n(x)^2}},
& x\in U_n,\\[6pt]
0, & x\notin U_n.
\end{cases}
$$
Observe that $g_n$ is smooth on $U_n$ and satisfies $|g_n(x)|\le\nu_n(x)$. Moreover, 
$g_n\in C_c^\infty((0,1)^2)$.

\vspace{0.1cm}

In summary, we have now constructed real-valued functions $f_n$ and $g_n$
satisfying
\begin{equation}\label{20261001eq30}
f_n,g_n\in C_c^\infty((0,1)^2),
\qquad
|f_n|\le\mu_n,
\qquad
|g_n|\le\nu_n.
\end{equation}
It remains to prove the two estimates in Theorem~\ref{thmmain}.

\medskip 

\section{Proof of Theorem~\ref{thmmain}: the estimates \eqref{mainest01} and \eqref{mainest02}}\label{Sec03}

In this section, we show that the functions $f_n$ and $g_n$
constructed in the preceding section satisfy the main estimates
\eqref{mainest01} and \eqref{mainest02}.

\subsection{The lower bound for $\langle\HH f_n,g_n\rangle$}
\label{20261003subseclower}

Fix $z\in\textnormal{\textbf{Out}}(n)$, and recall that by Lemma~\ref{20261001lem04}, the points
$$
p_{n,z,j}
=z+\left(\lambda^{j-n},\lambda^{-j-n}\right),
\qquad j=1,\ldots,n-2,
$$
belong to $\textnormal{\textbf{Int}}(n)$.
Note also that these points are distinct since their first coordinates are strictly
increasing in $j$. Let $x\in\supp \; \psi_{n,z}$ and
$y\in \; \supp \; \psi_{n,p_{n,z,j}}$, and by definition, we have 
$$
|x_i-z_i|\le\frac1{10\lambda^{2n}},
\qquad
\left|y_i-\left(p_{n,z,j}\right)_i\right|
\le\frac1{10\lambda^{2n}},
\qquad i=1,2.
$$
Consequently,
$$
\begin{aligned}
\left|(y_1-x_1)-\lambda^{j-n}\right|
&=\left|y_1-x_1- \left(p_{n, z, j} \right)_1+z_1\right| \\
&\le
\left|y_1-\left(p_{n,z,j}\right)_1\right|
+|x_1-z_1|\\
&\le\frac1{5\lambda^{2n}}
\le\frac15\lambda^{j-n},
\end{aligned}
$$
and similarly,
$$
\begin{aligned}
\left|(y_2-x_2)-\lambda^{-j-n}\right|
&=\left|y_2-x_2- \left(p_{n, z, j} \right)_2+z_2 \right| \\ 
&\le
\left|y_2-\left(p_{n,z,j}\right)_2\right|
+|x_2-z_2|\\
&\le\frac1{5\lambda^{2n}}
\le\frac15\lambda^{-j-n}.
\end{aligned}
$$
Thus,
$$
\frac45\lambda^{j-n}
\le y_1-x_1
\le\frac65\lambda^{j-n},
\qquad
\frac45\lambda^{-j-n}
\le y_2-x_2
\le\frac65\lambda^{-j-n},
$$
which gives
$$
0<(x_1-y_1)(x_2-y_2)
\lesssim \frac{1}{\lambda^{2n}}.
$$
Since $\psi_{n,p_{n,z,j}}$ is nonnegative and has $L^1$-norm 
$\lambda^{-2n}$, we obtain
\begin{align} \label{20261001eq23}
\HH\psi_{n,p_{n,z,j}}(x)
&=\frac1{\pi^2}
\int_{\R^2} \frac{\psi_{n,p_{n,z,j}}(y)}
{(x_1-y_1)(x_2-y_2)}\,dy \nonumber \\
&\gtrsim \lambda^{2n} \cdot 
\int_{\R^2}\psi_{n,p_{n,z,j}}(y)\,dy=1,
\end{align}
for any $x\in\supp \; \psi_{n,z}$ and $j=1,\ldots,n-2$.

\vspace{0.1cm}

Now take any $x\in\supp \; \nu_n$, there exists some $z \in \textnormal{\textbf{Out}}(n)$ such that $x \in \supp \; \psi_{n, z}$. By Khintchine's inequality
and \eqref{20261001eq23}, we obtain
\begin{align} \label{20261001eq26}
\mathbb E|\HH f_{n,\varepsilon}(x)|
&=\mathbb E\left|
\sum_{p\in\textnormal{\textbf{Int}}(n)}
\varepsilon_p\HH\psi_{n,p}(x)
\right| \gtrsim \left(
\sum_{p\in\textnormal{\textbf{Int}}(n)}
|\HH\psi_{n,p}(x)|^2
\right)^{1/2} \nonumber \\
&\ge
\left(\sum_{j=1}^{n-2}
|\HH\psi_{n,p_{n,z,j}}(x)|^2
\right)^{1/2} \gtrsim \sqrt{n-2}.
\end{align} 
Since $\varepsilon^{(n)}$ was chosen to maximize
$J_n(\varepsilon)$, by \eqref{20261001eq26}, we deduce that 
\begin{align} \label{20261001eq35}
\int_{\R^2}|\HH f_n(x)|\nu_n(x)\,dx
&=J_n(\varepsilon^{(n)})=\max_{\varepsilon}J_n(\varepsilon) \nonumber \\
&\ge\mathbb E J_n(\varepsilon)=\int_{\R^2}
\mathbb E|\HH f_{n,\varepsilon}(x)|\nu_n(x)\,dx \nonumber  \\
&\gtrsim \sqrt{n-2} \cdot 
\int_{\R^2}\nu_n(x)\,dx.
\end{align}
To this end, we record the elementary estimate
\begin{equation}\label{20261001eq37}
\frac{s^2}{\sqrt{1+s^2}}
\ge\frac{s^2}{1+|s|}
=|s|-1+\frac1{1+|s|}
\ge|s|-1,
\qquad s\in\R.
\end{equation}
We are now ready to establish a lower bound for $\langle \HH f_n, g_n \rangle$. Recall that by \eqref{20261002eq01}, $h_n=\HH f_n$ on $U_n$. Hence, by the definition of $g_n$, \eqref{20261001eq37}, \eqref{20261001eq35} and \eqref{20261001eq02},  we have
\begin{align*}
\langle\HH f_n,g_n\rangle
&=\int_{U_n}
\nu_n(x)\frac{h_n(x)^2}{\sqrt{1+h_n(x)^2}}\,dx\\
&\ge\int_{U_n}|h_n(x)|\nu_n(x)\,dx
-\int_{U_n}\nu_n(x)\,dx\\
&=\int_{\R^2}|\HH f_n(x)|\nu_n(x)\,dx
-\int_{\R^2}\nu_n(x)\,dx\\
&\gtrsim 
\left(\sqrt{n-2}-1\right)
\int_{\R^2}\nu_n(x)\,dx \gtrsim \sqrt{n} 
\end{align*}
for $n \ge 1$ sufficiently large. The proof for \eqref{mainest01} is complete. 

\subsection{The upper bound for $\Lambda_{\calS}(f_n,g_n)$}
\label{20261003subsecupper}

By \eqref{20261001eq30},
$|f_n|\le\mu_n$ and $|g_n|\le\nu_n$. Hence,
\begin{equation} \label{20261002eq30}
\Lambda_{\calS}(f_n,g_n)
\le\sum_{R\in\calS}|R|
\langle\mu_n\rangle_R\langle\nu_n\rangle_R,
\end{equation}
where we recall $\calS$ is a sparse family of rectangles in $\R^2$. To estimate the averages on the right-hand side of \eqref{20261002eq30}, we first
prove the following lemma.

\begin{lem}\label{20261002lem01}
Let $R\subseteq\R^2$ be a rectangle such that
\begin{equation}\label{20261002eq10}
\left(\int_R\mu_n\right)\left(\int_R\nu_n\right)>0.
\end{equation}
Then
\begin{equation}\label{20261002claim01}
\int_R\mu_n\le5|R|,
\qquad
\int_R\nu_n\le5|R|.
\end{equation}
\end{lem}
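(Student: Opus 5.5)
The plan is to use the hypothesis \eqref{20261002eq10} to show that $R$ cannot be too small, and then to count the bumps that meet $R$ using Lemma~\ref{20260928lem02}. Let $R$ have width $w$ and height $h$. By \eqref{20261002eq10}, $R$ meets $\supp\,\mu_n$ at some point $y\in\supp\,\psi_{n,p}$ with $p\in\textnormal{\textbf{Int}}(n)$. It also meets $\supp\,\nu_n$ at some point $x\in\supp\,\psi_{n,z}$ with $z\in\textnormal{\textbf{Out}}(n)$.

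\emph{Step 1: lower bounds on the sides of $R$.} By Lemma~\ref{20260928lem03}, and since all points lie in $[0,1]^2$, we have $|p_i-z_i|\ge \lambda^{-2n}$ for $i=1,2$, exactly as in \eqref{20261001eq05X}. The support condition then gives
$$|x_i-y_i|\ge |p_i-z_i|-\tfrac{1}{5\lambda^{2n}}\ge \tfrac45|p_i-z_i|.$$
Since $x,y\in R$, this yields $w\ge \frac{4}{5\lambda^{2n}}$ and $h\ge\frac{4}{5\lambda^{2n}}$. It also yields
$$|R|=wh\ge \tfrac{16}{25}|p_1-z_1|\,|p_2-z_2|\ge \tfrac{16}{25}\lambda^{-2n},$$
where the last inequality is again Lemma~\ref{20260928lem03}. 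In particular $\lambda^{-2n}\le \frac{25}{16}|R|$.

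\emph{Step 2: counting the bumps that meet $R$.} Each bump has nonnegative integrand with total mass $\lambda^{-2n}$. Hence $\int_R\mu_n\le \lambda^{-2n}\cdot\#\{q\in\textnormal{\textbf{Int}}(n):\supp\,\psi_{n,q}\cap R\neq\emptyset\}$. Any such $q$ lies in the rectangle $R'$ obtained by enlarging $R$ by $\frac{1}{10\lambda^{2n}}$ on each side. The sides of $R'$ are $w+\frac{1}{5\lambda^{2n}}\le \frac54 w$ and $h+\frac{1}{5\lambda^{2n}}\le\frac54 h$, by Step 1. Therefore $|R'|\le\frac{25}{16}|R|$.

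The counting bound \eqref{20260929eq05} extends to $R'$ even if $R'$ is closed, because its proof uses only the width and height. It gives at most $1+\frac{25}{64}\lambda^{2n}|R|$ such points. Consequently
$$\int_R\mu_n\le \lambda^{-2n}+\tfrac{25}{64}|R|\le \tfrac{25}{16}|R|+\tfrac{25}{64}|R|<5|R|.$$
The identical argument, with $\textnormal{\textbf{Out}}(n)$ in place of $\textnormal{\textbf{Int}}(n)$, gives the bound for $\nu_n$.

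The main obstacle is Step 1. Without the hypothesis \eqref{20261002eq10}, a tiny rectangle sitting inside a single bump would violate \eqref{20261002claim01}, since the density $\lambda^{2n}\phi$ is huge. The cross-separation Lemma~\ref{20260928lem03} is exactly what forces both sides of $R$, and its area, to be at least of the scale of the bumps. This in turn absorbs both the additive ``$1+$'' term in \eqref{20260929eq05} and the enlargement from $R$ to $R'$.
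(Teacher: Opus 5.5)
Your proof is correct and follows essentially the same route as the paper: Lemma~\ref{20260928lem03} forces both side lengths of $R$ to be at least of order $\lambda^{-2n}$, you enlarge $R$ by the bump radius, and you apply the counting bound \eqref{20260929eq05}. The only difference is that you keep the sharper factor $\frac45$ where the paper uses $\frac12$, which gives the constant $\frac{125}{64}$ in place of $5$.
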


\begin{proof}
The two estimates in \eqref{20261002claim01} are proved in the same way, so we only prove
the first one. Write $R=[a_1,b_1)\times[a_2,b_2)$ and let
$\ell_i=b_i-a_i$, $i=1,2$ be the side--lengths of $R$. We have the following observations. 

\begin{enumerate} 
\item 
\begin{equation} \label{20261002obs01}
\ell_i\ge 1/(2\lambda^{2n}), \qquad i=1,2.
\end{equation} 
\item 
\begin{equation} \label{20261002obs02}
|R|\ge 1/(4\lambda^{2n}).
\end{equation}
\end{enumerate}
To see these observations, we first note that by \eqref{20261002eq10}, there exists 
$p\in\textnormal{\textbf{Int}}(n)$ and
$z\in\textnormal{\textbf{Out}}(n)$ such that
$$
\int_R\psi_{n,p}>0,
\qquad \textrm{and} \qquad 
\int_R\psi_{n,z}>0.
$$
Choose $x, y\in R$ with $\psi_{n,z}(x)>0$ and $\psi_{n,p}(y)>0$, and hence 
$$
|y_i-p_i|\le\frac1{10\lambda^{2n}},
\qquad
|x_i-z_i|\le\frac1{10\lambda^{2n}},
\qquad i=1,2.
$$
By Lemma~\ref{20260928lem03}, we have 
$$
|p_1-z_1|\,|p_2-z_2|\ge\frac1{\lambda^{2n}}.
$$
Since $p,z\in[0,1]^2$, $|p_i-z_i| \le 1, \; i=1, 2$. It follows that $|p_i-z_i|\ge\lambda^{-2n}$ for $i=1,2$.
Thus,
$$
\begin{aligned}
\ell_i
&\ge|x_i-y_i|\\
&\ge|p_i-z_i|-|y_i-p_i|-|x_i-z_i|\\
&\ge|p_i-z_i|-\frac1{5\lambda^{2n}}\\
&
\ge\frac12|p_i-z_i| \ge \frac1{2\lambda^{2n}},
\end{aligned}
$$
which proves \eqref{20261002obs01}. Also,
$$
|R|=\ell_1\ell_2
\ge\frac14|p_1-z_1|\,|p_2-z_2|
\ge\frac1{4\lambda^{2n}},
$$
which proves \eqref{20261002obs02}. 

\vspace{0.1cm}

We now count the centers of the bumps whose integrals over
$R$ are positive. Define
$$
R^*:=
\left[a_1-\frac1{10\lambda^{2n}},
      b_1+\frac1{10\lambda^{2n}}\right)
\times
\left[a_2-\frac1{10\lambda^{2n}},
      b_2+\frac1{10\lambda^{2n}}\right).
$$
By the support condition on the bumps, if
$\int_R\psi_{n,p}>0$, then $p\in R^*$.
By \eqref{20261002obs01}, the side lengths of $R^*$ satisfy
$$
\ell_i+\frac1{5\lambda^{2n}}\le2\ell_i,
\qquad i=1,2,
$$
and hence $|R^*|\le4|R|$. Finally, by $\|\psi_{n,p}\|_{L^1(\R^2)}=\lambda^{-2n}$,
\eqref{20260929eq05}, and \eqref{20261002obs02}, we have 
$$
\begin{aligned}
\int_R\mu_n(x)\,dx
&=\sum_{p\in\textnormal{\textbf{Int}}(n)\cap R^*}
  \int_R\psi_{n,p}(x)\,dx\\
&\le\frac1{\lambda^{2n}} \cdot 
  \#\big(\textnormal{\textbf{Int}}(n)\cap R^*\big)\\
&\le\frac1{\lambda^{2n}}+\frac{|R^*|}{4}\\
&\le\frac1{\lambda^{2n}}+|R|\\
&\le 5|R|.
\end{aligned}
$$
The proof is complete. 
\end{proof}

We now return to the estimate for $\Lambda_{\calS}(f_n,g_n)$. It is clear that 
$$
\int_R\mu_n=\int_{R\cap[0,1)^2}\mu_n
\qquad \textrm{and} \qquad 
\int_R\nu_n=\int_{R\cap[0,1)^2}\nu_n.
$$
If both integrals are positive, then $R\cap[0,1)^2$ is a
rectangle, and Lemma~\ref{20261002lem01} gives
$$
\int_R\mu_n\le5|R\cap[0,1)^2|
\qquad \textrm{and} \qquad 
\int_R\nu_n\le5|R\cap[0,1)^2|.
$$
Therefore, for every rectangle $R$,
\begin{equation}\label{20261002claim01X}
\langle\mu_n\rangle_R\langle\nu_n\rangle_R
\lesssim \left(\frac{|R\cap[0,1)^2|}{|R|}\right)^2.
\end{equation}

To sum \eqref{20261002claim01X} over $\calS$, we use
the following lemma.

\begin{lem}\label{20261002lem02}
Define
$$
m(s):=\frac1{1+\operatorname{dist}(s,[0,1])},
\qquad s\in\R,
$$
and
$$
W(x):=m(x_1)m(x_2),
\qquad x\in\R^2.
$$
Then, for every rectangle $R\subseteq\R^2$,
$$
\frac{|R\cap[0,1)^2|}{|R|}\le W(x),
\qquad x\in R.
$$
\end{lem}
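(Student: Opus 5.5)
The plan is to reduce to one dimension, using the product structure. Write $R=I_1\times I_2$ with $I_i=[a_i,b_i)$, so that
$$
\frac{|R\cap[0,1)^2|}{|R|}=\prod_{i=1}^2\frac{|I_i\cap[0,1)|}{|I_i|}.
$$
Since $W(x)=m(x_1)m(x_2)$ and $x\in R$ means $x_i\in I_i$, it suffices to prove the one-dimensional inequality
$$
\frac{|I\cap[0,1)|}{|I|}\le m(s),\qquad s\in I,
$$
for every interval $I=[a,b)$. Multiplying the two coordinate estimates, which are all nonnegative, then gives the lemma.

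For the one-dimensional claim, fix $s\in I$ and set $d=\operatorname{dist}(s,[0,1])$. If $d=0$, then $m(s)=1$ and the bound is trivial, because $|I\cap[0,1)|\le|I|$. If $d>0$, say $s>1$ (the case $s<0$ is symmetric), then $d=s-1$. We may assume $I\cap[0,1)\neq\emptyset$, since otherwise the left side vanishes; hence $a<1$. Since $s\in I$, the interval $I$ contains $[\max(a,0),s)$, so
$$
|I|\ge s-\max(a,0)\ge |I\cap[0,1)|+(s-1)=|I\cap[0,1)|+d,
$$
where we used $|I\cap[0,1)|\le 1-\max(a,0)$. Since $|I\cap[0,1)|\le1$ and $t\mapsto t/(t+d)$ is increasing, we obtain
$$
\frac{|I\cap[0,1)|}{|I|}\le\frac{|I\cap[0,1)|}{|I\cap[0,1)|+d}\le\frac{1}{1+d}=m(s).
$$

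The only delicate point is the inequality $|I|\ge|I\cap[0,1)|+d$. It uses the fact that the portion of $I$ between the unit interval and the point $s$ has length at least $d$, and that this portion is disjoint from $I\cap[0,1)$. Once that is in place, the rest is just the monotonicity of $t\mapsto t/(t+d)$ together with $|I\cap[0,1)|\le 1$.
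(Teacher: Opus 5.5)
Your proof is correct and follows essentially the same route as the paper: you factor the ratio over the two coordinates, reduce to the one-dimensional bound $|I\cap[0,1)|/|I|\le m(s)$, and prove it from $|I|\ge |I\cap[0,1)|+\operatorname{dist}(s,[0,1])$ together with $|I\cap[0,1)|\le 1$. Your justification of the length inequality via $[\max(a,0),s)\subseteq I$ simply spells out what the paper asserts in one line.
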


\begin{proof}
It is clear that  
$$
m(s)=
\begin{cases}
\dfrac1{1-s}, & s<0,\\[4pt]
1, & 0\le s\le1,\\[4pt]
\dfrac1s, & s>1.
\end{cases}
$$
We first show that, for every bounded interval $I$ of positive
length and every $s\in I$,
\begin{equation}\label{20261002eq23}
\frac{|I\cap[0,1]|}{|I|}\le m(s).
\end{equation}
It suffices to consider the case when $s\notin[0,1]$ with
$|I\cap[0,1]|>0$; otherwise \eqref{20261002eq23} is trivial. Since $I$ contains both $s$ and
$I\cap[0,1]$, $|I|\ge|I\cap[0,1]|+\operatorname{dist}(s,[0,1])$. Thus, 
$$
\begin{aligned}
\frac{|I\cap[0,1]|}{|I|}
&\le
\frac{|I\cap[0,1]|}
{|I\cap[0,1]|+\operatorname{dist}(s,[0,1])}\\
&\le\frac1{1+\operatorname{dist}(s,[0,1])}
=m(s),
\end{aligned}
$$
which proves \eqref{20261002eq23}.

\vspace{0.1cm}

Now write $R=I_1\times I_2$. For every $x=(x_1,x_2)\in R$,
applying \eqref{20261002eq23} to each interval, we have
$$
\frac{|R\cap[0,1)^2|}{|R|}
=\frac{|I_1\cap[0,1]|}{|I_1|} \cdot 
 \frac{|I_2\cap[0,1]|}{|I_2|}
\le m(x_1) \cdot m(x_2)
=W(x)
$$
as desired. 
\end{proof}

Let $\calS$ be any sparse family of rectangles in $\R^2$, and choose
pairwise disjoint measurable sets $E_R\subset R$ satisfying
$|E_R|\ge|R|/2$. By \eqref{20261002eq30}, and 
\eqref{20261002claim01X}, we see that 
$$
\begin{aligned}
\Lambda_{\calS}(f_n,g_n)
&\le\sum_{R\in\calS}|R|
\langle\mu_n\rangle_R\langle\nu_n\rangle_R\\
&\lesssim \sum_{R\in\calS}|R|
\left(\frac{|R\cap[0,1)^2|}{|R|}\right)^2\\
&\lesssim  \sum_{R\in\calS}|E_R|
\left(\frac{|R\cap[0,1)^2|}{|R|}\right)^2\\
&\lesssim \sum_{R\in\calS}\int_{E_R}W(x)^2\,dx\\
&\lesssim \int_{\R^2}W(x)^2\,dx \lesssim 1,
\end{aligned}
$$
where in the second last inequality, we used the disjointness of the sets
$E_R$. This proves \eqref{mainest02}.

\medskip 

Finally, combining \eqref{mainest01} and \eqref{mainest02}, the proof of
Theorem~\ref{thmmain} is complete. 

\section{A further remark on the strong maximal operator} \label{Sec04}

Recall that the strong maximal operator is defined by
$$
{\bf M}f(x)
:=\sup_{R\ni x}\frac1{|R|}\int_R|f(y)|\,dy,
$$
where the supremum is taken over all rectangles containing $x$. In the final part of this paper, we show that the functions $f_n$ and $g_n$ constructed above do \emph{not} give
a counterexample to sparse domination for ${\bf M}$.
More precisely, let
$$
\calS_0:=\big\{[0,1)^2\big\}, 
$$
and we will show that
\begin{equation}\label{20261002eq40}
\big|\langle {\bf M}f_n,g_n\rangle\big|
\lesssim \frac{1}{c_0}\Lambda_{\calS_0}(f_n,g_n),
\end{equation}
where $c_0$ is the constant in Lemma~\ref{20260928lem01}.

\vspace{0.1cm}

We first observe that
\begin{equation}\label{20261002eq41}
{\bf M}\mu_n(x)\lesssim 1
\qquad\textrm{whenever }\nu_n(x)>0.
\end{equation}
Indeed, fix such a point $x$ and let $R$ be any rectangle
containing $x$, and hence, by the continuity of $\nu_n$, we have $\int_R \nu_n>0$.  If $\int_R\mu_n>0$, by
Lemma~\ref{20261002lem01}, we have 
$$
\frac1{|R|}\int_R\mu_n \lesssim 1. 
$$
The same inequality holds if $\int_R\mu_n=0$.
Taking the supremum over all rectangles containing $x$
proves \eqref{20261002eq41}. Since $|f_n|\le\mu_n$ and
$|g_n|\le\nu_n$, it follows that
\begin{align} \label{20261002eq42}
\big|\langle {\bf M}f_n,g_n\rangle\big|
&\le\int_{\R^2}{\bf M}f_n(x)|g_n(x)|\,dx \nonumber \\
&\le\int_{\{x:\nu_n(x)>0\}}
{\bf M}\mu_n(x)|g_n(x)|\,dx \nonumber \\
&\lesssim \|g_n\|_{L^1(\R^2)}.
\end{align}

We next show that
\begin{equation}\label{20261002eq43}
\|f_n\|_{L^1(\R^2)}\ge c_0.
\end{equation}
For this, we first claim that for distinct $p,q\in\textnormal{\textbf{Int}}(n)$, one has
\begin{equation} \label{20261002claim03}
\supp \; \psi_{n,p}\cap\supp \; \psi_{n,q}=\emptyset.
\end{equation} 
Assume the contrary, then by the support condition on the bumps, we have
$$
|p_i-q_i|\le\frac1{5\lambda^{2n}},
\qquad i=1,2,
$$
and hence
$$
|p_1-q_1|\,|p_2-q_2|
\le\frac1{25\lambda^{4n}}
<\frac4{\lambda^{2n}},
$$
which is a contradiction to Lemma~\ref{20260928lem02}. Thus, \eqref{20261002claim03} holds. As a consequence, since the coefficients in
\eqref{20260928inputfcn} belong to $\{-1,1\}$, we obtain
\begin{equation} \label{20261002eq45}
|f_n(x)|=\sum_{p\in\textnormal{\textbf{Int}}(n)}\psi_{n,p}(x)=\mu_n(x),
\qquad x\in\R^2.
\end{equation} 

On the other hand, by Lemma~\ref{20261001lem04}, the map
$$
z\longmapsto p_{n,z,1}
=z+\left(\lambda^{1-n},\lambda^{-1-n}\right)
$$
sends $\textnormal{\textbf{Out}}(n)$ into
$\textnormal{\textbf{Int}}(n)$. Observe that this map is injective, and therefore
\begin{equation} \label{20261002eq46}
\#\textnormal{\textbf{Int}}(n)
\ge\#\textnormal{\textbf{Out}}(n).
\end{equation} 
Hence, by \eqref{20261002eq45}, the fact that each bump has $L^1(\R^2)$ norm $\lambda^{-2n}$, \eqref{20261002eq46}, and \eqref{20261001eq02}, we see that 
$$
\begin{aligned}
\|f_n\|_{L^1(\R^2)}
&=\int_{\R^2}\mu_n(x)\,dx\\
&=\frac{\#\textnormal{\textbf{Int}}(n)}{\lambda^{2n}}\\
&\ge\frac{\#\textnormal{\textbf{Out}}(n)}{\lambda^{2n}}
=\int_{\R^2}\nu_n(x)\,dx
\ge c_0,
\end{aligned}
$$
which proves \eqref{20261002eq43}.

\vspace{0.1cm}

Finally, since $f_n$ and $g_n$ are supported in $(0,1)^2$,
$$
\Lambda_{\calS_0}(f_n,g_n)
=\|f_n\|_{L^1(\R^2)}\|g_n\|_{L^1(\R^2)}.
$$
Combining \eqref{20261002eq42} and \eqref{20261002eq43}
gives
$$
\big|\langle {\bf M}f_n,g_n\rangle\big|
\lesssim \|g_n\|_{L^1(\R^2)}
\lesssim \frac{1}{c_0}\Lambda_{\calS_0}(f_n,g_n),
$$
as claimed.

\bigskip 

\noindent{\bf Acknowledgement.}
The author would like to thank his Ph.D. advisor, Brian Street,
for suggesting this problem in 2019 after the author had
completed his thesis problem, and for his guidance and support
throughout the author's doctoral studies at the University of Wisconsin, Madison.
The author was supported by the NSF grant DMS-2555999
and by the Simons Travel grant MPS-TSM-00007213.

\end{document}